\documentclass[a4paper, 11pt]{amsart}

\usepackage{amsmath,amsfonts,amssymb,amscd}
\usepackage{verbatim}
\usepackage{stmaryrd}
\input{xy}
\usepackage{enumitem}
\usepackage{hyperref,mdwlist}
\usepackage[english]{babel}
\usepackage{latexsym}
\usepackage{amsopn}
\usepackage[all]{xy}
\usepackage{mathrsfs}
\usepackage{float}
\usepackage{fullpage}
\usepackage{tikz}

\usepackage{graphicx}
\usepackage{tikz-cd}
\usepackage{csquotes}
\usepackage[backend=biber,style=alphabetic,maxbibnames=99]{biblatex}
\AtBeginBibliography{\scriptsize\sloppy\setlength{\bibitemsep}{0pt}}
\AtEveryBibitem{\clearfield{doi}\clearfield{eprint}\clearfield{archivePrefix}\clearfield{primaryClass}\clearfield{url}\clearfield{urldate}\clearfield{note}}
\hypersetup{hidelinks,hypertexnames=false}
\usepackage[nameinlink,capitalise]{cleveref}

\newtheorem{theorem}{Theorem}[section]
\newtheorem{proposition}[theorem]{Proposition}
\newtheorem{corollary}[theorem]{Corollary}

\theoremstyle{definition}

\newtheorem{example}[theorem]{Example}
\theoremstyle{remark}
\newtheorem{remark}[theorem]{Remark}

\newcommand{\cO}{\mathcal O}
\newcommand{\cC}{\mathcal C}
\newcommand{\CO}{\mathcal{CO}}
\newcommand{\gap}{\operatorname{gap}}
\newcommand{\St}{\operatorname{St}}
\newcommand{\ed}{\operatorname{edges}}
\newcommand{\mi}{\operatorname{min}}
\newcommand{\ma}{\operatorname{max}}
\newcommand{\mc}{\operatorname{mc}}

\title{On facet gaps of order and chain polytopes}
\author{Ghislain Fourier}
\address{Chair of Algebra and Representation Theory, RWTH Aachen University, Pontdriesch 10--16, 52062 Aachen, Germany}
\email{fourier@art.rwth-aachen.de}
\subjclass[2020]{52B05, 06A07}
\keywords{order polytope, chain polytope, poset, facets}

\begin{document}

\begin{abstract}

We discuss three questions from a recent paper of Bhandari, Cunningham,
Morrell, Oh and Smith. We obtain an exact local formula for the facet gap
between the order and the chain polytope of a finite poset. This gives a
classification of the case $\gap(P)=2$ in terms of star elements. For
marked chain--order polytopes, the same local weights determine the facet
differences between all admissible decompositions.
\end{abstract}

\maketitle

\section{Introduction}

All posets in this note are finite. Let $P$ be a poset. Stanley introduced the order polytope $\cO(P)$ and the chain polytope $\cC(P)$ and proved that they have the same Ehrhart polynomial \cite{Stanley1986}. Hibi and Li proved
\[
  f_{n-1}(\cO(P))\leq f_{n-1}(\cC(P))
\]
and characterized the equality case \cite{HibiLi2016}. Hibi, Li, Sahara and Shikama proved the analogous inequality for edges \cite{HibiLiSaharaShikama2017}. For further work on face numbers and related $f$-vector questions we refer to \cite{FreijHollantiLundstrom2024,
AhmadFourierJoswig2026,
Mori2025,
FreijHollantiLundstromMori2026,
FreijHollantiLundstrom2025}.

Marked order and marked chain polytopes, introduced by Ardila, Bliem and Salazar \cite{ArdilaBliemSalazar2011}, were further studied in \cite{Fourier2016}, in particular with respect to Minkowski decompositions and unimodular equivalence; see also \cite{JochemkoSanyal2014,Pegel2018}. We will use the marked chain--order polytopes introduced in \cite{FangFourier2016}, which interpolate between the marked order and marked chain polytopes. This construction was subsequently placed in a continuous family of marked poset polytopes in \cite{FangFourierLitzaPegel2020}; related combinatorial properties, in particular Minkowski decompositions and reflexivity, were studied in \cite{FangFourierPegel2020}. These constructions are different from the order--chain polytopes of Hibi, Li, Li, Mu and Tsuchiya \cite{HibiEtAl2019}. This distinction is exactly what matters for the last of these questions.

Following \cite{BhandariEtAl2025} we set
\[
  \gap(P)=f_{n-1}(\cC(P))-f_{n-1}(\cO(P)).
\]
The facet numbers are
\begin{equation}\label{eq:basic-facets}
 f_{n-1}(\cO(P))=\ma(P)+\mi(P)+\ed(P),
 \qquad
 f_{n-1}(\cC(P))=\mc(P)+|P|,
\end{equation}
where $\mc(P)$ is the number of maximal chains of $P$; see \cite{HibiLi2016,BhandariEtAl2025}.

In \cite{BhandariEtAl2025}, the authors introduce the crossing number
\[
  \operatorname{cr}(v)=(u(v)-1)(d(v)-1),
\]
where $u(v)$ and $d(v)$ count maximal chains above and below $v$. They prove bounds for $\gap(P)$ and characterize the case $\gap(P)=1$ by $X$-orchids. They conclude with three questions: a classification of the case $\gap(P)=2$ \cite[Question~15]{BhandariEtAl2025}, an upper bound which takes the value $0$ on $X$-avoiding posets and $1$ on $X$-orchids \cite[Question~16]{BhandariEtAl2025}, and facet gaps for interpolating order--chain constructions \cite[Question~17]{BhandariEtAl2025}.

Questions~15--17 of \cite{BhandariEtAl2025} motivate this note. Using the star elements introduced in \cite{FangFourier2016, HibiLi2016}, we obtain
\begin{equation}\label{eq:intro-formula}
  \gap(P)=\sum_{q\in P}(d(q)-1)(r(q)-1).
\end{equation}
This can be read from \cite[Proposition~4.5 and Corollary~4.6]{FangFourier2016}. This gives the bound asked for in
\cite[Question~16]{BhandariEtAl2025} with equality and yields a
classification of the case $\gap(P)=2$ asked for in
\cite[Question~15]{BhandariEtAl2025}. We also apply the same local weights
to the marked chain--order polytopes of \cite{FangFourier2016}, which
addresses the facet-gap problem in
\cite[Question~17]{BhandariEtAl2025} for this family. We make no claim for
the different order--chain polytopes of \cite{HibiEtAl2019}.

The formula is asymmetric: it uses maximal chains below $q$ and covers above $q$. Its dual localizes the same contribution at the opposite end. For an X-orchid, these two localizations are the endpoints of the stalk.

The paper is organized as follows. In Section~\ref{sec:gap} we prove \eqref{eq:intro-formula}. In Section~\ref{sec:small} we discuss \cite[Questions~15 and~16]{BhandariEtAl2025}. Section~\ref{sec:q17} concerns \cite[Question~17]{BhandariEtAl2025}.

\medskip
\noindent\textbf{Acknowledgment.}
The author gratefully acknowledges financial support by the Deutsche Forschungsgemeinschaft (DFG, German Research Foundation) through Symbolic Tools in Mathematics and their Application (TRR 195, project-ID 286237555).\\
ChatGPT (OpenAI) was used for language editing and for computations in examples.

\section{The facet gap and star elements}\label{sec:gap}

We add a smallest element $\widehat 0$ and a largest element $\widehat 1$ to $P$ and write $\widehat P=P\sqcup\{\widehat 0,\widehat 1\}$. For $q\in P$, let
\[
  d(q)=\#\{\text{maximal chains in $\widehat P$ from $\widehat 0$ to $q$}\}
\]
and
\[
  r(q)=\#\{x\in\widehat P:q\lessdot x\}.
\]
In particular, $d(q)=1$ for a minimal element of $P$ and $r(q)=1$ for a maximal element of $P$.

Following \cite{FangFourier2016}, an element $q$ is a \emph{star element} if
\[
  d(q)\ge 2\qquad\text{and}\qquad r(q)\ge 2.
\]
We denote the set of star elements by $\St(P)$ and set
\[
  w(q)=(d(q)-1)(r(q)-1).
\]
Thus $w(q)>0$ precisely for $q\in\St(P)$.

\begin{theorem}\label{thm:gap-formula}
For every poset $P$,
\begin{equation}\label{eq:gap-formula}
  \gap(P)=\sum_{q\in P}(d(q)-1)(r(q)-1)
  =\sum_{q\in\St(P)}w(q).
\end{equation}
\end{theorem}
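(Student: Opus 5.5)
The proof is a double count along the covering edges of $\widehat P$. It compares $\sum_{q\in P} d(q)\,r(q)$ with the facet numbers in \eqref{eq:basic-facets}. Everything rests on the recursion for $d$. Every maximal chain from $\widehat 0$ to an element $x\neq\widehat 0$ of $\widehat P$ passes through a unique element $y$ with $y\lessdot x$ just before reaching $x$. Hence, with the convention $d(\widehat 0)=1$,
\[
  d(x)=\sum_{y\in\widehat P,\ y\lessdot x} d(y).
\]
Two special cases matter. For $x=\widehat 1$, $d(\widehat 1)$ counts the maximal chains of $\widehat P$, which correspond bijectively to the maximal chains of $P$, so $d(\widehat 1)=\mc(P)$. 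For $x\in P$, the term $y=\widehat 0$ occurs exactly when $x$ is minimal in $P$.

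First I expand
\[
  \sum_{q\in P}(d(q)-1)(r(q)-1)=\sum_{q\in P}d(q)r(q)-\sum_{q\in P}d(q)-\sum_{q\in P}r(q)+|P|,
\]
and evaluate the three sums separately.

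\emph{The term $\sum_{q\in P} r(q)$.} This counts covering pairs $q\lessdot x$ in $\widehat P$ with $q\in P$. Such a pair has either $x\in P$, which gives an edge of the Hasse diagram of $P$, or $x=\widehat 1$, which happens exactly when $q$ is maximal in $P$. Hence $\sum_{q\in P} r(q)=\ed(P)+\ma(P)$.

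\emph{The term $\sum_{q\in P} d(q)r(q)$.} I write it as a sum over the same covering pairs $q\lessdot x$ with $q\in P$, each weighted by $d(q)$, and regroup by the upper element $x\in P\cup\{\widehat 1\}$. For fixed $x\in P$, the recursion gives $\sum_{q\in P,\,q\lessdot x} d(q)=d(x)-[x\text{ minimal in }P]$, since the only possibly missing lower cover is $\widehat 0$, contributing $d(\widehat 0)=1$. For $x=\widehat 1$, every lower cover lies in $P$ (assuming $P\neq\emptyset$), so the inner sum is $d(\widehat 1)=\mc(P)$. Therefore
\[
  \sum_{q\in P}d(q)r(q)=\sum_{x\in P}d(x)-\mi(P)+\mc(P).
\]

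Substituting both evaluations, the $\sum d$ terms cancel and I get
\[
  \sum_{q\in P}(d(q)-1)(r(q)-1)=\mc(P)+|P|-\ma(P)-\mi(P)-\ed(P).
\]
By \eqref{eq:basic-facets} the right-hand side is $f_{n-1}(\cC(P))-f_{n-1}(\cO(P))=\gap(P)$.

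The second equality in \eqref{eq:gap-formula} is immediate. We have $d(q)\ge1$ and $r(q)\ge1$ for every $q\in P$, so each summand is nonnegative, and it is nonzero exactly when both factors are at least $2$, that is, when $q\in\St(P)$.

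The only delicate step is the regrouping for $\sum d(q)r(q)$. The boundary elements $\widehat 0$ and $\widehat 1$ have to be bookkept correctly: the missing $\widehat 0$-term produces $\mi(P)$, and the $\widehat 1$-term produces $\mc(P)$. The rest is routine.
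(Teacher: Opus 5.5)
Your proposal is correct and is essentially the paper's own direct argument. It uses the same expansion, the same count $\sum_{q\in P} r(q)=\ed(P)+\ma(P)$, and the same regrouping of $\sum_{q\in P} d(q)r(q)$ by upper covers via the recursion $d(x)=\sum_{y\lessdot x}d(y)$, where the missing $\widehat 0$-terms give $\mi(P)$ and the $\widehat 1$-term gives $\mc(P)$.
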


\begin{proof}
The statement can be read from \cite[Proposition~4.5 and Corollary~4.6]{FangFourier2016}. Indeed, the ordinary order and chain polytopes are obtained by adjoining $\widehat 0,\widehat 1$, marking them by $0,1$, and taking the two extremal admissible decompositions. For a non-empty $P$ this marked poset is regular in the sense of \cite[Definition~4.1]{FangFourier2016}, so the facet computation there applies. We give also a direct argument, which does not use marked posets.

First,
\[
  \sum_{q\in P}r(q)=\ed(P)+\ma(P),
\]
since the first summand counts the cover relations inside $P$ and every maximal element has one additional cover, namely $\widehat 1$. Moreover,
\begin{equation}\label{eq:dr-doublecount}
  \sum_{q\in P}d(q)r(q)
  =\sum_{q\in P}d(q)-\mi(P)+\mc(P).
\end{equation}
To see this, sum $d(q)$ over all covers $q\lessdot x$ in $\widehat P$. If $x\in P$ is not minimal, then
\[
  d(x)=\sum_{q\lessdot x}d(q).
\]
The terms with $x\in P$ therefore give $\sum_{x\in P}d(x)-\mi(P)$, while the covers $q\lessdot\widehat 1$ give exactly the number $\mc(P)$ of maximal chains of $P$. Hence
\begin{align*}
 \sum_{q\in P}(d(q)-1)(r(q)-1)
 &=\sum_qd(q)r(q)-\sum_qd(q)-\sum_qr(q)+|P|\\
 &=\mc(P)+|P|-\mi(P)-\ma(P)-\ed(P)\\
 &=\gap(P)
\end{align*}
by \eqref{eq:basic-facets}.
\end{proof}

\begin{remark}\label{rem:dual}
The weight in \eqref{eq:gap-formula} is asymmetric. Applying \cref{thm:gap-formula} to the dual poset gives
\begin{equation}\label{eq:dual-gap}
  \gap(P)=\sum_{q\in P}(u(q)-1)(\ell(q)-1),
\end{equation}
where $u(q)$ is the number of maximal chains from $q$ to $\widehat 1$ and $\ell(q)$ is the number of lower covers of $q$ in $\widehat P$.

For an $X$-orchid with stalk $c_1<\cdots<c_k$, the upper endpoint $c_k$ has two upper covers and at least two maximal chains below it, hence it is a star element. By \cite[Theorem~14]{BhandariEtAl2025}, an $X$-orchid has gap one, and \cref{thm:gap-formula} then forces $c_k$ to be the unique star element and to have weight one. Thus \eqref{eq:gap-formula} localizes the contribution at $c_k$. Dually, \eqref{eq:dual-gap} localizes the same contribution at $c_1$. Thus the first formula detects $c_k$ whereas the dual formula detects $c_1$. The orchid stalk connects these two localizations.
\end{remark}

\begin{example}[The $4\times 4$ grid with a unique minimum and maximum]\label{ex:grid}
The undirected $4\times 4$ grid graph admits different poset orientations. We orient the edges so that there is a unique minimal and a unique maximal vertex; equivalently, we consider the product poset $[4]\times[4]$. We draw its Hasse diagram as a diamond in \cref{fig:grid}. The four filled vertices are the star elements, and their labels indicate the pairs $(d(q),r(q))$.

\begin{figure}[H]
\centering
\begin{tikzpicture}[x=0.78cm,y=0.72cm]
\tikzset{vtx/.style={circle,draw,fill=white,inner sep=0.55pt,line width=0.35pt},
         star/.style={circle,draw,fill=black,inner sep=0.75pt,line width=0.35pt},
         lab/.style={draw=none,fill=none,inner sep=1pt,font=\scriptsize}}
\foreach \name/\x/\y in {11/0/0,12/-1/1,21/1/1,13/-2/2,22/0/2,31/2/2,14/-3/3,23/-1/3,32/1/3,41/3/3,24/-2/4,33/0/4,42/2/4,34/-1/5,43/1/5,44/0/6}
  \node[vtx] (\name) at (\x,\y) {};
\foreach \a/\b in {11/12,11/21,12/13,12/22,21/22,21/31,13/14,13/23,22/23,22/32,31/32,31/41,14/24,23/24,23/33,32/33,32/42,41/42,24/34,33/34,33/43,42/43,34/44,43/44}
  \draw (\a)--(\b);
\foreach \name in {22,23,32,33}
  \node[star] at (\name) {};
\node[lab,anchor=west] at (0.17,2.06) {$(2,2)$};
\node[lab,anchor=east] at (-1.18,3.04) {$(3,2)$};
\node[lab,anchor=west] at (1.18,3.04) {$(3,2)$};
\node[lab,anchor=west] at (0.17,4.06) {$(6,2)$};
\end{tikzpicture}
\caption{The $4\times4$ grid oriented with a unique minimum and maximum, drawn as a diamond. The filled vertices are the star elements; the labels show their types $(d(q),r(q))$.}
\label{fig:grid}
\end{figure}

We have $|P|=16$, $\ed(P)=24$, one minimal and one maximal element. Thus $f_{15}(\cO(P))=26$. There are $\binom{6}{3}=20$ maximal chains, so $f_{15}(\cC(P))=36$ and $\gap(P)=10$. The four star weights are
\[
  1,\qquad 2,\qquad 2,\qquad 5,
\]
which add up to $10$.
\end{example}

\section{Gaps one and two}\label{sec:small}

The exact formula gives a classification of the first possible gaps in terms of star elements. In particular, part~(iii) below answers \cite[Question~15]{BhandariEtAl2025} at the level of the local star data.

\begin{corollary}\label{cor:smallgap}
\begin{enumerate}[label=\textup{(\roman*)}]
  \item $\gap(P)=0$ if and only if $\St(P)=\varnothing$.
  \item $\gap(P)=1$ if and only if there is exactly one star element $q$ and
  \[
     (d(q),r(q))=(2,2).
  \]
  \item $\gap(P)=2$ if and only if exactly one of the following occurs:
  \begin{enumerate}[label=\textup{(\alph*)}]
    \item there are exactly two star elements $q_1,q_2$, and both are simple:
    \[
       (d(q_i),r(q_i))=(2,2),\qquad i=1,2;
    \]
    \item there is exactly one star element $q$, and
    \[
       (d(q),r(q))\in\{(2,3),(3,2)\}.
    \]
  \end{enumerate}
\end{enumerate}
\end{corollary}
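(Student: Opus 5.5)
The plan is to read everything off \cref{thm:gap-formula}, which writes $\gap(P)=\sum_{q\in\St(P)}w(q)$ with $w(q)=(d(q)-1)(r(q)-1)$. Since $d(q),r(q)\ge 1$ for every $q\in P$, each summand is a non-negative integer, and it is positive exactly when $q\in\St(P)$. For a star element we have $d(q)-1\ge 1$ and $r(q)-1\ge 1$, so $w(q)\ge 1$. The corollary then reduces to a small bookkeeping problem: find the ways to write $0$, $1$ and $2$ as a sum of positive integers, each of which is a product $ab$ with $a,b\ge 1$.

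For (i), $\gap(P)=0$ holds if and only if the sum over $\St(P)$ is empty, since any star element contributes at least $1$. For (ii), a sum of positive integers equals $1$ precisely when it has a single term equal to $1$. So there is exactly one star element $q$, and $(d(q)-1)(r(q)-1)=1$. With both factors positive integers, this forces $d(q)=r(q)=2$. The converse is immediate from the formula.

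For (iii), a sum of positive integers equals $2$ in exactly two ways: two terms $1+1$, or one term $2$. In the first case there are exactly two star elements, each of weight $1$, and by the argument of (ii) each has type $(2,2)$; this is (a). In the second case there is exactly one star element $q$ with $(d(q)-1)(r(q)-1)=2$. The only factorizations of $2$ into positive integers are $1\cdot 2$ and $2\cdot 1$, so $(d(q),r(q))\in\{(2,3),(3,2)\}$; this is (b). The two alternatives differ in $|\St(P)|$, so they are mutually exclusive, which justifies the wording ``exactly one''. Conversely, each configuration gives $\gap(P)=2$ directly from \eqref{eq:gap-formula}.

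I expect no real obstacle. The only point needing care is positivity: $d(q)\ge 1$ and $r(q)\ge 1$ for every $q\in P$, because $\widehat 0<q$ gives at least one maximal chain below $q$ and $q<\widehat 1$ gives at least one upper cover. This is what makes all weights non-negative and makes the sum over $P$ coincide with the sum over $\St(P)$.
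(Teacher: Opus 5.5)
Your proposal is correct and follows the paper's proof: both read everything off \cref{thm:gap-formula} and enumerate the ways to write $0$, $1$ and $2$ as sums of positive weights $(d(q)-1)(r(q)-1)$. You also justify two points the paper leaves implicit, namely $d(q),r(q)\ge 1$ and the mutual exclusivity of (a) and (b).
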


\begin{proof}
By \cref{thm:gap-formula},
\[
  \gap(P)=\sum_{q\in\St(P)} w(q),
  \qquad
  w(q)=(d(q)-1)(r(q)-1)\in\mathbb Z_{>0}.
\]
This immediately gives (i).

For (ii), the sum is equal to $1$ if and only if there is exactly one
star element $q$ of weight one. Since
\[
  (d(q)-1)(r(q)-1)=1,
\]
this is equivalent to $(d(q),r(q))=(2,2)$.

For (iii), either there is one star element of weight two, or there are
exactly two star elements, both of weight one. In the first case
\[
  (d(q)-1)(r(q)-1)=2,
\]
hence
\[
  (d(q),r(q))\in\{(2,3),(3,2)\}.
\]
In the second case
\[
  (d(q_i)-1)(r(q_i)-1)=1,\qquad i=1,2,
\]
and therefore
\[
  (d(q_i),r(q_i))=(2,2),\qquad i=1,2.
\]
The converse is immediate from \cref{thm:gap-formula}.
\end{proof}

In case \textup{(iii)(a)}, the two simple stars may be incomparable or comparable. In the comparable case they can be intertwined along a linear chain. In case \textup{(iii)(b)}, one star contributes weight two; here $d(q)=3$ counts three maximal chains below $q$ and does not require three lower covers. Figure~\ref{fig:gap2} shows schematic realizations.

\begin{figure}[H]
\centering
\begin{minipage}[t]{0.32\textwidth}
\centering
\begin{tikzpicture}[x=0.44cm,y=0.46cm]
\tikzset{vtx/.style={circle,draw,fill=white,inner sep=0.45pt,line width=0.32pt},
         star/.style={circle,draw,fill=black,inner sep=0.65pt,line width=0.32pt},
         lab/.style={draw=none,fill=none,inner sep=0.8pt,font=\scriptsize}}

\node[vtx] (a1) at (-2.5,0.9) {};
\node[vtx] (a2) at (-1.3,0.9) {};
\node[vtx] (c1) at (-1.9,1.8) {};
\node[vtx] (m1) at (-1.9,2.6) {};
\node[star] (q1) at (-1.9,3.4) {};
\node[vtx] (u11) at (-2.5,4.3) {};
\node[vtx] (u12) at (-1.3,4.3) {};

\node[vtx] (b1) at (1.3,0.9) {};
\node[vtx] (b2) at (2.5,0.9) {};
\node[vtx] (c2) at (1.9,1.8) {};
\node[vtx] (m2) at (1.9,2.6) {};
\node[star] (q2) at (1.9,3.4) {};
\node[vtx] (u21) at (1.3,4.3) {};
\node[vtx] (u22) at (2.5,4.3) {};

\draw (a1)--(c1)--(m1)--(q1)--(u11);
\draw (a2)--(c1);
\draw (q1)--(u12);
\draw (b1)--(c2)--(m2)--(q2)--(u21);
\draw (b2)--(c2);
\draw (q2)--(u22);
\node[lab,anchor=south] at (-1.6,2.8) {$q_1\;\ (2,2)$};
\node[lab,anchor=south] at (2.3,2.8) {$q_2\;\; (2,2)$};
\end{tikzpicture}

\smallskip
Incomparable simple stars
\end{minipage}\hfill
\begin{minipage}[t]{0.32\textwidth}
\centering
\begin{tikzpicture}[x=0.47cm,y=0.46cm]
\tikzset{vtx/.style={circle,draw,fill=white,inner sep=0.45pt,line width=0.32pt},
         star/.style={circle,draw,fill=black,inner sep=0.65pt,line width=0.32pt},
         lab/.style={draw=none,fill=none,inner sep=0.8pt,font=\scriptsize}}
\node[vtx] (a) at (-1.0,0) {};
\node[vtx] (b) at (1.0,0) {};
\node[vtx] (c1) at (0,0.9) {};
\node[vtx] (c2) at (0,1.7) {};
\node[star] (q1) at (0,2.5) {};
\node[vtx] (s) at (1.55,3.3) {};
\node[vtx] (m1) at (0,3.3) {};
\node[vtx] (m2) at (0,4.1) {};
\node[star] (q2) at (0,4.9) {};
\node[vtx] (u) at (-0.9,5.85) {};
\node[vtx] (v) at (0.9,5.85) {};
\draw (a)--(c1)--(c2)--(q1)--(m1)--(m2)--(q2)--(u);
\draw (b)--(c1);
\draw (q1)--(s);
\draw (q2)--(v);
\node[lab,anchor=east] at (-0.12,2.5) {$q_1$};
\node[lab,anchor=west] at (0.12,2.3) {$(2,2)$};
\node[lab,anchor=east] at (-0.12,4.9) {$q_2$};
\node[lab,anchor=west] at (0.12,4.7) {$(2,2)$};
\end{tikzpicture}

\smallskip
Comparable, intertwined stars
\end{minipage}\hfill
\begin{minipage}[t]{0.32\textwidth}
\centering
\begin{tikzpicture}[x=0.47cm,y=0.46cm]
\tikzset{vtx/.style={circle,draw,fill=white,inner sep=0.45pt,line width=0.32pt},
         star/.style={circle,draw,fill=black,inner sep=0.65pt,line width=0.32pt},
         lab/.style={draw=none,fill=none,inner sep=0.8pt,font=\scriptsize}}
\node[vtx] (a) at (-1.35,0) {};
\node[vtx] (b) at (0,0) {};
\node[vtx] (c) at (1.35,0) {};
\node[vtx] (c1) at (0,0.95) {};
\node[vtx] (c2) at (0,1.75) {};
\node[vtx] (c3) at (0,2.55) {};
\node[star] (q) at (0,3.35) {};
\node[vtx] (u) at (-0.9,4.45) {};
\node[vtx] (v) at (0.9,4.45) {};
\draw (a)--(c1)--(c2)--(c3)--(q)--(u);
\draw (b)--(c1);
\draw (c)--(c1);
\draw (q)--(v);
\node[lab,anchor=east] at (-0.12,3.2) {$q$};
\node[lab,anchor=west] at (0.12,3.2) {$(3,2)$};
\end{tikzpicture}

\smallskip
One star of weight two
\end{minipage}
\caption{Schematic realizations of Corollary~\ref{cor:smallgap}(iii). Left: two incomparable simple stars. Middle: two comparable simple stars; here the crossing regions are intertwined along a linear chain. Right: one star of type $(3,2)$; the dual type is $(2,3)$. Linear chains inside the crossing regions do not change the star weights.}
\label{fig:gap2}
\end{figure}

Theorem~\ref{thm:gap-formula} also answers \cite[Question~16]{BhandariEtAl2025}: the right-hand side of \eqref{eq:gap-formula} is the requested upper bound, in fact with equality. By \cite[Theorems~12 and~14]{BhandariEtAl2025}, it takes the values 0 on X-avoiding posets and 1 on X-orchids.
The star description also recovers the endpoint picture of an orchid: by \cref{cor:smallgap}(ii), an $X$-orchid has one star of type $(2,2)$; as observed in Remark~\ref{rem:dual}, this star is the upper endpoint $c_k$ of the stalk, while the dual formula selects $c_1$.

\section{Chain--order polytopes and Question~17}\label{sec:q17}

In \cite[Question~17]{BhandariEtAl2025}, the authors ask about facet
gaps for polytopes interpolating between order and chain polytopes.
We consider here the marked chain--order polytopes of
\cite{FangFourier2016}.

Let $(P,A,\lambda)$ be a regular marked poset and
\[
  P\setminus A=U_1\sqcup U_2
\]
an admissible decomposition, i.e.\ $U_2$ is an order ideal.
Here $U_1$ is the order part and $U_2$ the chain part; see
\cite[Definition~1.5]{FangFourier2016}.

We use the notation of \cite{FangFourier2016}: $p\to q$ means that
$q$ covers $p$, and $\rightsquigarrow q$ denotes the set of maximal
chains ending in $q$. Thus
\[
  |q\to|=\#\{p:q\to p\},
  \qquad
  |\rightsquigarrow q|
  =\#\{\text{maximal chains ending in }q\},
\]
and for a star element $q$ we set
\[
  w(q)=(|q\to|-1)(|\rightsquigarrow q|-1).
\]
Let $N=|P\setminus A|$.

\begin{proposition}\label{prop:q17-ff}
For every admissible decomposition $(U_1,U_2)$ of a regular marked poset,
\begin{equation}\label{eq:co-facet-formula}
 f_{N-1}\bigl(\CO_{U_1,U_2}(\lambda)\bigr)
 =f_{N-1}\bigl(\cO_{P,A}(\lambda)\bigr)
 +\sum_{q\in U_2\cap\St(P)}w(q).
\end{equation}
Consequently, facet differences between two admissible decompositions are obtained by subtracting the corresponding sums of star weights.
\end{proposition}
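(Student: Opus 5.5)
The plan is to use the facet description of marked chain--order polytopes from \cite{FangFourier2016}, in the form of \cite[Proposition~4.5 and Corollary~4.6]{FangFourier2016}, and reduce the statement to a counting identity of the kind used in the proof of \cref{thm:gap-formula}. For a regular marked poset, the facets of $\CO_{U_1,U_2}(\lambda)$ are irredundantly given by three families of inequalities. First, the order-type inequalities $x_p\le x_q$ for covers $p\to q$ with $q\in U_1$ (including covers by marked elements), and the lower bounds on elements of $U_1$ adjacent to marked elements. Second, the nonnegativity inequalities $x_q\ge 0$ for $q\in U_2$. Third, the chain inequalities: one for each maximal chain inside $U_2$ ending in a cover $q\to p$ with $p\notin U_2$, of the form $\sum_{c}x_c\le x_p-\lambda(a)$ or $\le\lambda(b)-\lambda(a)$. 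Regularity is exactly what guarantees that these inequalities are pairwise distinct facets. So the first step is to recall this list and record it as a formula
\[
f_{N-1}\bigl(\CO_{U_1,U_2}(\lambda)\bigr)=\#\{\text{covers with upper end in }U_1\}+|U_2|+\sum_{q\in U_2}|\rightsquigarrow q|\cdot\#\{p\notin U_2:q\to p\}.
\]
For $U_2=\varnothing$ this reproduces $f_{N-1}(\cO_{P,A}(\lambda))$.

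Second, I will argue by induction on $|U_2|$. Since $U_2$ is an order ideal, any nonempty $U_2$ has a maximal element $q$, and $U_2'=U_2\setminus\{q\}$ is again admissible. I will compute the change in facet count when $q$ is moved from $U_1$ to $U_2$. The covers $p\to q$ stop being order facets: there are $\ell(q)$ of them, the lower covers of $q$. One nonnegativity facet appears. Every $q'\to q$ with $q'\in U_2'$ loses its chain facets through $q$, namely $|\rightsquigarrow q'|$ of them each. Meanwhile $q$ gains $|\rightsquigarrow q|\cdot|q\to|$ chain facets, since all its upper covers lie outside $U_2$ by maximality. The key identity is $|\rightsquigarrow q|=\sum_{q'\to q}|\rightsquigarrow q'|$, where a marked lower cover contributes $1$; this is the analogue of $d(x)=\sum_{q\lessdot x}d(q)$ from the proof of \cref{thm:gap-formula}. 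The covers of $q$ from below split into those from $U_2'$ and those from $U_1\cup A$. However, every element below $q$ lies in the ideal $U_2$, so the only lower covers outside $U_2'$ are marked elements. Their chains are the length-one chains, and they were counted among the order facets. Putting this together, the net change is
\[
|\rightsquigarrow q|\,|q\to|-|\rightsquigarrow q|-|q\to|+1=w(q),
\]
after the identity converts $\ell(q)$ together with the removed chain facets into $|\rightsquigarrow q|$. This weight vanishes unless $q\in\St(P)$. Summing over the induction gives \eqref{eq:co-facet-formula}, and the final sentence follows by subtracting two instances of it.

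The main obstacle is the bookkeeping at the boundary, namely marked elements and the exact form of the facet list. I must check that lower covers by marked elements are counted consistently on both sides. Since $|\rightsquigarrow q|$ is defined with maximal chains starting at marked elements, each such cover contributes exactly $1$, matching a single order facet. I must also check that regularity prevents coincidences among the listed inequalities, for instance a chain inequality coinciding with an order inequality when $\lambda$ values are equal. For this I will rely directly on \cite[Corollary~4.6]{FangFourier2016} rather than reprove irredundancy.
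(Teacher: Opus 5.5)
Your route (induction on $|U_2|$, removing a maximal element $q$ and counting the local change from the facet description via $|\rightsquigarrow q|=\sum_{q'\to q}|\rightsquigarrow q'|$) differs from the paper's. The paper simply cites \cite[Corollary~4.6]{FangFourier2016} for the jump $w(q)$ at a star and \cite[Theorem~4.7]{FangFourier2016} (unimodular equivalence) for non-star steps. Your route is viable, but the bookkeeping, which is the entire content of your argument, is wrong as written.

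The order-type facets of $\CO_{U_1,U_2}(\lambda)$ are the covers $a\to b$ with $a,b\in U_1\cup A$, not both marked. Your first family instead counts the covers $p\to q$ with $p\in U_2$, $q\in U_1$. These are not order facets: the inequality attached to $a\to\cdots\to p\to q$ is a chain inequality, already in your third term. Your first family also omits the upper bounds $x_p\le\lambda(b)$ for $p\in U_1$ covered by a marked $b$. So your displayed formula does not reproduce $f_{N-1}(\cO_{P,A}(\lambda))$ at $U_2=\varnothing$. For one unmarked element between two marked ones it gives $1$ instead of $2$, and in the ordinary case it misses the $\ma(P)$ facets $x_p\le 1$.

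The same confusion breaks the increment. Write $U_2'=U_2\setminus\{q\}$. Before the move, the only lower covers of $q$ giving order facets are the marked ones, say $\ell_A(q)$ of them; covers from $U_2'$ enter only through chain facets. Moreover, all $|q\to|$ order facets $x_q\le x_b$ for the upper covers $b$ of $q$ disappear, which your list never records.

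The changes you actually list are $-\ell(q)$, $+1$, $-\sum_{q'\in U_2',\,q'\to q}|\rightsquigarrow q'|$ and $+|\rightsquigarrow q|\,|q\to|$. They add up to $|\rightsquigarrow q|(|q\to|-1)+1-\#\{q'\in U_2':q'\to q\}$, not $w(q)$. For a one-element poset, where $\cO(P)=\cC(P)=[0,1]$, they give $+1$. Your final expression only appears because $-|q\to|$ is inserted without justification, and because the recursion is made to absorb all of $\ell(q)$. In fact the recursion gives $\ell_A(q)+\sum_{q'\in U_2',\,q'\to q}|\rightsquigarrow q'|=|\rightsquigarrow q|$, since every lower cover of $q$ lies in $U_2'\cup A$.

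With the corrected facet list the change is
$|\rightsquigarrow q|\,|q\to|+1-|q\to|-\ell_A(q)-\sum_{q'\in U_2',\,q'\to q}|\rightsquigarrow q'|=w(q)$,
and your argument then goes through. It even shows that non-star steps contribute $0$ without needing unimodular equivalence.

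Note also that if you cite \cite[Corollary~4.6]{FangFourier2016} for irredundancy, you are already citing the increment $w(q)$ itself. That is exactly the paper's shorter proof. Your count is only worthwhile if you take the irredundant facet description alone as input.
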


\begin{proof}
Starting from the marked order polytope $(U_1,U_2)=(P\setminus A,\varnothing)$, move the elements of the order ideal $U_2$ to the chain part one at a time, in an order compatible with the poset. If the moved element is not a star, the set of star elements in the chain part does not change, and \cite[Theorem~4.7]{FangFourier2016} shows that the two consecutive polytopes are unimodular equivalent. If a star $q$ is moved, \cite[Corollary~4.6]{FangFourier2016} gives the exact increase in the number of facets as
\[
  (|q\to|-1)(|\rightsquigarrow q|-1)=w(q).
\]
Summing these increments gives \eqref{eq:co-facet-formula}.
\end{proof}

For the ordinary order/chain situation, obtained by adjoining $\widehat 0,\widehat 1$ and marking them by $0,1$, one has $|q\to|=r(q)$ and $|\rightsquigarrow q|=d(q)$. This is in fact a regular marked poset. Hence the extreme choice $U_2=P$ recovers \cref{thm:gap-formula}; intermediate admissible decompositions simply record which star weights have entered the chain part. In this sense, the facet-gap part of \cite[Question~17]{BhandariEtAl2025} already has an exact answer for the family of \cite{FangFourier2016}.

The description is particularly simple for the small-gap configurations of Section~\ref{sec:small}. If $P$ is an $X$-orchid with stalk $c_1<\cdots<c_k$, its unique star is $c_k$ and has weight one. Since $U_2$ is an order ideal,
\[
  c_k\in U_2
  \quad\Longleftrightarrow\quad
  \{c_1,\ldots,c_k\}\subseteq U_2.
\]
Thus the facet number increases by one exactly when the whole stalk has entered the chain part. The star formula records this change at the endpoint $c_k$, whereas the orchid records the complete linear crossing region.

For gap two, Figure~\ref{fig:gap2} also describes how the increments can occur. If the two simple stars $q_1,q_2$ are incomparable, their weight-one contributions can enter independently. If $q_1<q_2$, admissibility gives
\[
  q_2\in U_2\;\Longrightarrow\;q_1\in U_2,
\]
so the contribution of $q_1$ must occur first; the middle picture of Figure~\ref{fig:gap2} shows one intertwined realization. If there is one star of weight two, its passage to the chain part produces a single jump of size two. The cases $(d,r)=(3,2)$ and $(2,3)$ are dual.

The order--chain polytopes of \cite{HibiEtAl2019} are different. There the cover relations are decomposed as
\[
 E(P)=E_o\sqcup E_c,
\]
and the construction combines the order polytope associated with $E_o$ and the chain polytope associated with $E_c$. This is an edge decomposition, not the vertex decomposition above; see \cite[Remark~1.9]{FangFourier2016}. Therefore \cref{prop:q17-ff} applies to the marked chain--order polytopes of \cite{FangFourier2016}, but not to the Hibi--Li--Li--Mu--Tsuchiya order--chain polytopes. We do not make a claim about crossing-number bounds for the latter family.

\printbibliography

\end{document}